\theoremstyle{plain}
\newtheorem{theorem}{Theorem}[section]
\newtheorem{lemma}[theorem]{Lemma}
\newtheorem{corollary}[theorem]{Corollary}
\theoremstyle{definition}
\newtheorem{example}[theorem]{Example}
\newtheorem{remark}[theorem]{Remark}
\newtheorem{definition}[theorem]{Definition}
\numberwithin{equation}{section}
\numberwithin{table}{section}
\newcommand{\Sym}{\textit{Sym}}     %% Algebra of syymmetric functions
\newcommand{\D}[1]{D_{#1}}          %% Operator D
\newcommand{\ZZ}{\mathbb{Z}}             %% integer numbers
\renewcommand{\ss}{\sigma}              %% Generatingseries for the h functions
\begin{document}

%%%% TITLE

%[Commutation relations for operators]
\title[Combinatorics of the $\Gamma_{(t|X)}$ vertex operator]{A comment of the combinatorics of the  vertex operator $\Gamma_{(t|X)}$} 
%\date{\today}

%%%% AUTHORS AND ADDRESSES

\author[M.H. Rosas]{Mercedes Helena Rosas}
\address{Departamento de \'Algebra, Facultad de Matem\'aticas, Universidad de Sevilla, Avda.\ Reina Mercedes, Sevilla, Espa\~na}
\email{mrosas@us.es}

\thanks{M.\ Rosas has been partially supported by projects MTM2010--19336, MTM2013-40455-P, FQM--333, P12--FQM--2696 and FEDER.}

\maketitle

%%%%%%%%%%%%%%%%%%%%
%%%%%%%%%%%%%%%%%%%%
%%%%%%%%%%%%%%%%%%%%

\begin{abstract}
The Jacobi--Trudi identity associates a symmetric function to any integer sequence.
 Let $\Gamma_{(t|X)}$ be the  vertex operator defined by $\Gamma_{(t|X)} s_\alpha =\sum_{n \in \ZZ}  s_{(n,\alpha)} [X] t^n$. We provide a combinatorial proof  for the identity  $\Gamma_{(t|X)} s_\alpha = \sigma[tX] s_{\alpha}\big[x-1/t\big] $ due to Thibon et al., \cite{ScharfThibonWybourne, ThibonHopf}. We include an overview of all the combinatorial ideas behind this beautiful identity, including a combinatorial description for the expansion of $s_{(n,\alpha)} [X] $ in the Schur basis, for any integer value of $n$.
\end{abstract}

\section{introduction}

This short note concerns a  powerful operator on symmetric functions,  the vertex operator $\Gamma_{(t|X)}$. Let $\alpha$ be a partition, and let $s_\alpha$ be the corresponding  Schur function, then  $\Gamma_{(t|X)}$ is   the generating function  for  Schur functions indexed by those integer sequences obtained from $\alpha$ by  prepending an  integer part:
 \begin{align}
\Gamma_{(t|X)} s_\alpha =\sum_{n \in \ZZ} s_{(n,\alpha)} [X] t^n
\end{align}

%
%The vertex operator $\Gamma_{(t|X)}$ was introduced by Thibon in order to make sense of the classical notation...
%Then  Thibon et al. used $\Gamma_{(t|X)}$  to study stability properties for  Kronecker and plethysm coefficients,  \cite{ScharfThibonWybourne, ThibonHopf}.  In mathematical physics, they appear in the context of the boson--fermion correspondence,  \cite{ JimboMiwa, Kac},   as carefully explained in \cite{ThibonHopf}.
%
This vertex operator is a classical tool in the theory of symmetric functions.  It has been used by Thibon and his collaborators for the study of various phenomena of stability \cite{ ThibonCarre, ThibonLascoux:stability, ThibonHopf, ThibonScharf:innerPlethysm, ScharfThibonWybourne}. Previously, it had been used in mathematical physics, Jing  \cite{Jing:Spin}.  Likewise, $\Gamma_{(t|X)}$  is the generating series for Bernstein's creation operators introduced in \cite{Zelevinsky:LNM}. % See also \cite[I.\S 5 Ex. 29]{MacdonaldBook}. 
Recently,   $\Gamma_{(t|X)}$ has   played a central role in our  work on operators on symmetric functions \cite{BMOR}, and on the growth of the Kronecker coefficients \cite{BRR, BRR2}.
   
The aim of this note is to provide a combinatorial proof for the  identity  
\begin{align} \label{theidentity}
\Gamma_{(t|X)} s_\alpha = \sigma[tX] s_{\alpha}\big[x-1/t\big]
\end{align}
 by means of a sign reversing involution. While concise algebraic proof  can be found in  \cite{ScharfThibonWybourne, ThibonHopf}, readers with a more  combinatorial taste  may find this proof interesting.

We take the opportunity to present a compilation of  results (some of them very well-known, other less so) needed to interpret all functions appearing in Eq. 
\ref{theidentity} combinatorially. As usual, we follow the notation of Macdonald's book \cite{MacdonaldBook}, unless it is explicitely stated. A point of depart will be that  we draw our partitions the French way.
Lascoux's approach to symmetric functions permeates this note, \cite{LascouxBook}.
%\begin{align*}
%	\Gamma_{(t|X)} s_\alpha =\sum_{n \in \ZZ} s_{(n,\alpha)} [X] t^n\\
%	 \Gamma_{(t|X)}= \sigma[tX] \sigma[ -1/t \, \, x^\perp]
%\end{align*}

The famous  Jacobi--Trudi identity express Schur function in the complete homogeneous basis
% Explicitly, given a finite integer sequence $\alpha= (\alpha_1,\alpha_2,\ldots,\alpha_l)$,   we use the Jacobi--Trudi determinant to define $s_\alpha$:  
$
s_{(\alpha_1,\alpha_2,\ldots,\alpha_l)}=\det(h_{\alpha_{i}+j-i})_{i,j=1\ldots l}
$
where, as usual, the $h_k$'s are the complete homogeneous symmetric functions,   $h_0=1$ and $h_k=0$ when $k < 0$.

While it is a well--known theorem that the previous identity holds for any partition (see for example the beautiful combinatorial proof of Gessel and Viennot using lattive paths), 
the Jacobi-Trudi determinant makes perfectly good  sense for any integer sequence $\alpha$.  This allows us to make sense of the definition of the vertex operator $\Gamma_{(t|X)}$, that deals with  functions $s_\alpha$ whose first part is out-of-order or even nonpositive. 
%Moreover, when $\alpha$ is a partition (\emph{i.e.,} when $\alpha$ is a weakly decreasing sequence of  positive integers),
% the Jacobi--Trudi identity says that this determinant coincides with the Schur function $s_{\alpha}$ 
 It turns out that for any  integer sequence, the Jacobi--Trudi determinant $s_\alpha$ is equal to either zero, or to $\pm$ a Schur function $s_\lambda$, with $\lambda$ a partition. In this situation, we say that the partition $\lambda$  is the {\em rectification of $\alpha$. }

The process of rectifying a shape coming from an integer sequence is better described using an example. The identity  $s_{5,3,2,7}=s_{5,4,4,3}$  can be shown to hold writing the corresponding Jacobi-Trudi determinant, and performing row exchanges:
\begin{align*}
 \left| \begin{array}{cccc}
h_5 & h_6 & h_7 & h_8 \\
h_2 & h_3 & h_4 & h_5 \\
1 & h_1 & h_2 & h_3 \\
h_4 & h_5& h_6 & h_7 \\
\end{array} \right|
= 
(-)
\left| \begin{array}{cccc}
h_5 & h_6 & h_7 & h_8 \\
h_2 & h_3 & h_4 & h_5 \\
h_4 & h_5& h_6 & h_7 \\
1 & h_1 & h_2 & h_3 \\
\end{array} \right|
= 
(+)
\left| \begin{array}{cccc}
h_5 & h_6 & h_7 & h_8 \\
h_4 & h_5& h_6 & h_7 \\
h_2 & h_3 & h_4 & h_5 \\
1 & h_1 & h_2 & h_3 \\
\end{array} \right|
\end{align*}
Note that  the weight of the rectified partition coincides with weight of the original  integer sequence (i.e., the sum of the parts of the integer partition).
This process is sometimes known as Littlewood's modification rule.

%This operation can be performed directly on tableaux. 
%Recall that we use the French convention for drawing partitions/integer sequences. 
It is well-known that we can perform the row exchanges  on the Jacobi--Trudi determinants directly on the  tableaux indexing these determinant ``by letting gravity act". Draw the try diagrams corresponding to the Jacobi--Trudi identities appearing in our running example:
\begin{align*}
\ytableausetup
{boxsize=.9em}\ytableausetup
{aligntableaux=top}
\begin{ytableau}
*(blue!40)  & *(blue!40)   &*(blue!40)   &*(blue!40) & *(blue!40)& *(blue!40)& *(blue!40)\\
*(blue!40)   & *(blue!40)   \\
*(blue!40)  & *(blue!40)   &*(blue!40)   \\
*(blue!40)  & *(blue!40)   &*(blue!40)   &*(blue!40) & *(blue!40)\\
\none\\
\none[\quad  \quad  \quad  \quad  {(5,3,2,7)} ] \\
\end{ytableau}
&&= (-) \,\
\begin{ytableau}
*(blue!40)  & *(blue!40)   &*(blue!40)   \\
*(blue!40)   & *(blue!40)   &*(blue!40) & *(blue!40)& *(blue!40) & *(blue!40)  \\
*(blue!40)  & *(blue!40)   &*(blue!40)   \\
*(blue!40)  & *(blue!40)   &*(blue!40)   &*(blue!40) & *(blue!40)\\
\none\\
\none[\quad  \quad  \quad  {(5,3,6,3)} ] \\
\end{ytableau} 
&&=(+) \,\
\begin{ytableau}
*(blue!40)  & *(blue!40)   &*(blue!40)   \\
*(blue!40)   & *(blue!40)   &*(blue!40) & *(blue!40)\\
*(blue!40)  & *(blue!40)   &*(blue!40) & *(blue!40)  & *(blue!40)\\
*(blue!40)  & *(blue!40)   &*(blue!40)   &*(blue!40) & *(blue!40)\\
\none\\
\none[\quad  \quad    \quad  {(5,5,4,3)} ] \\
\end{ytableau}
\end{align*}
 The
exchange of the last two rows in the Jacobi--Trudi determinant can be achieved by  fixing the last cell on the third row, and the one North-East
to it (here, these two cells are drawn in grey), and letting   the cell to the right of the grey cell in the top row
row fall into the third row.
\begin{align*}
\ytableausetup
{boxsize=.9em}\ytableausetup
{aligntableaux=top}
&&(+) \,\
\begin{ytableau}
*(blue!40)  & *(blue!40)   &*(blue!40)   &*(blue!40) & *(blue!40)& *(blue!40)& *(blue!40)\\
*(blue!40)   & *(blue!40)   \\
*(blue!40)  & *(blue!40)   &*(blue!40)   \\
*(blue!40)  & *(blue!40)   &*(blue!40)   &*(blue!40) & *(blue!40)\\
\none\\
\none[\quad   \quad  \quad  {(5,3,2,7)} ] \\
\end{ytableau}
&&=(+) \,\
\begin{ytableau}
*(blue!40)  & *(blue!40)   &*(gray!30)   &*(blue!40) & *(blue!40)& *(blue!40) & *(blue!40) \\
*(blue!40)   & *(gray!30)   \\
*(blue!40)  & *(blue!40)   &*(blue!40)   \\
*(blue!40)  & *(blue!40)   &*(blue!40)   &*(blue!40) & *(blue!40)\\
\end{ytableau}
&&= (-) \,\
\begin{ytableau}
*(blue!40)  & *(blue!40)   &*(gray!30)   \\
*(blue!40)   & *(gray!30)   &*(blue!40) & *(blue!40)& *(blue!40)& *(blue!40)  \\
*(blue!40)  & *(blue!40)   &*(blue!40)   \\
*(blue!40)  & *(blue!40)   &*(blue!40)   &*(blue!40) & *(blue!40)\\
\none\\
\none[\quad   \quad  \quad  {(5,3,6,3)} ] \\
\end{ytableau}
\end{align*}
The result is still not a partition. We repeat the previous procedure with the next two rows:
\begin{align*}
&&(-) \,\
\begin{ytableau}
*(blue!40)  & *(blue!40)   &*(blue!40)   \\
*(blue!40)   & *(blue!40)   &*(blue!40) & *(blue!40)& *(blue!40)& *(blue!40)\\
*(blue!40)  & *(blue!40)   &*(blue!40)   \\
*(blue!40)  & *(blue!40)   &*(blue!40)   &*(blue!40) & *(blue!40)\\
\none\\
\none[\quad \quad  \quad  {(5,3,6,3)} ] \\
\end{ytableau}
&&=(-) \,\
\begin{ytableau}
*(blue!40)  & *(blue!40)   &*(blue!40)   \\
*(blue!40)   & *(blue!40)   &*(blue!40) & *(gray!30)& *(blue!40)& *(blue!40)\\
*(blue!40)  & *(blue!40)   &*(gray!30)   \\
*(blue!40)  & *(blue!40)   &*(blue!40)   &*(blue!40) & *(blue!40)\\
\end{ytableau}
&&=(+) \,\
\begin{ytableau}
*(blue!40)  & *(blue!40)   &*(blue!40)   \\
*(blue!40)   & *(blue!40)   &*(blue!40) & *(blue!40)\\
*(blue!40)  & *(blue!40)   &*(blue!40) & *(blue!40)& *(blue!40) \\
*(blue!40)  & *(blue!40)   &*(blue!40)   &*(blue!40) & *(blue!40)\\
\none\\
\none[\quad  \quad \quad  {(5,5,4,3)} ] \\
\end{ytableau}
\end{align*}
%Sometimes this procedure is described as saying that we draw the partition the French way, and then let gravity act.

Note that, if at some point a sequence can not be rectified using this procedure,  then the corresponding Jacobi--Trudi determinant is zero. This situation is illustrated in the next picture, where there is not cell to move from the first to the second row. Remark that this situation occurs because in the corresponding Jacobi--Trudi determinant there is a repeated row.
\begin{align*}
\ytableausetup
{boxsize=.9em}\ytableausetup
{aligntableaux=top}
\begin{ytableau}
*(blue!40)  & *(blue!40)  &*(blue!40)  & *(blue!40)   &*(gray!30)   \\
*(blue!40)  & *(blue!40)  & *(blue!40)   & *(gray!30)  \\
\none\\
\none[ \quad \quad \quad s_{(4,5)}=0]
\end{ytableau}
\end{align*}

\subsection{The rectification of $s_{(n,\alpha)}$ }

We need an explicit description of those partitions that can occur as the result of  rectifying an integer sequences, where the first part is the only one allowed to be out-of-order or nonpositive. 

\begin{lemma} \label{characterizationSchur}
Let $\alpha$ be a partition, then after rectification,
 \[
\sum_{n \in \ZZ} s_{(n,\alpha)} [X] = \sum_{\mu} (-1)^{ht(\mu)} s_{\mu}
\]
where we sum over the set of partitions  $\mu$  obtained from $\alpha$ by
\begin{enumerate}

\item Fixing some $k\ge0$,

\item Adding to $\alpha$ a cell to the first $k$ columns, (forming a horizontal strip),

\item Removing a cell from $\alpha$ from  each row $\alpha_i> k$, (forming a vertical strip),

\end{enumerate}  
and where  $ht(\mu)$ be the number of cells in this vertical strip .

\end{lemma}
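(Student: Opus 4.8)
The plan is to run the rectification procedure of the introduction on the sequence $(n,\alpha)$, but to bookkeep it through beta-numbers (equivalently, through the positions of the rows of the Jacobi--Trudi matrix) rather than by sliding boxes one at a time. Writing $l$ for the length of $\alpha$ and $L=l+1$, I record the rows of the Jacobi--Trudi determinant of $\beta=(n,\alpha_1,\dots,\alpha_l)$ by their contents $c_i=\beta_i+L-i$. These are the single value $m:=n+l$ together with the beta-set $B=\{\alpha_j+l-j:1\le j\le l\}$ of $\alpha$, a set of $l$ distinct nonnegative integers. The rectification rule says that $s_{(n,\alpha)}=0$ unless the contents are pairwise distinct and nonnegative, in which case $s_{(n,\alpha)}=\operatorname{sgn}(w)\,s_\mu$, where $w$ is the permutation sorting the contents into strictly decreasing order and $\mu_i=c_{w(i)}-(L-i)$. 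Since $B$ already consists of distinct nonnegative integers, the determinant is nonzero exactly when $m\ge 0$ and $m\notin B$; and as $n$ ranges over $\ZZ$ the content $m$ ranges over $\ZZ_{\ge 0}\setminus B$, hitting each value once.

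First I would fix such an $m$ and read off the corresponding term. Only the first content is out of order, so $w$ merely slides $m$ past the $d:=\#\{b\in B: b>m\}$ elements of $B$ larger than it; hence $\operatorname{sgn}(w)=(-1)^d$. Writing $B=\{b_1>\dots>b_l\}$ with $b_i=\alpha_i+l-i$, reading $\mu$ off the sorted set $B\cup\{m\}$ with $L$ beads gives
\[
\mu=(\alpha_1-1,\dots,\alpha_d-1,\;k,\;\alpha_{d+1},\dots,\alpha_l),\qquad k:=m-(l-d),
\]
because the extra bead increases the $\rho$-shift by one on the $d$ rows whose content exceeds $m$, while the rows of smaller content keep their part unchanged.

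Next I would translate this back into the box language of the statement. The content inequalities $b_d>m>b_{d+1}$ become, after substituting $b_i=\alpha_i+l-i$ and $k=m-l+d$, the chain $\alpha_d>k\ge\alpha_{d+1}$; thus $d=\#\{i:\alpha_i>k\}$ and the rows that get shortened are exactly those with $\alpha_i>k$. Deleting the last box of each of these rows is a vertical strip $\alpha/\nu$ of size $d$, lying strictly to the right of column $k$. The remaining data $\mu=(\dots,k,\dots)$ exhibits $\mu$ as $\nu$ together with an extra part equal to $k$, i.e.\ $\mu$ is obtained from $\nu$ by adding one box in each of the columns $1,\dots,k$; this is a horizontal strip $\mu/\nu$ lying in the first $k$ columns. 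Since the two strips occupy disjoint ranges of columns, the operations are independent and $\mu$ is precisely the partition built in the lemma, with $ht(\mu)=d$ and sign $(-1)^{ht(\mu)}$.

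Finally I would check that nothing is counted twice and nothing is missed: the assignment $m\mapsto k=m-\#\{b\in B:b<m\}$ is a bijection from $\ZZ_{\ge 0}\setminus B$ onto $\ZZ_{\ge 0}$, so each $k\ge 0$ arises from exactly one admissible $m$, and distinct $k$ produce distinct beta-sets $B\cup\{m\}$ and hence distinct $\mu$. Summing over $n$ (equivalently over $k\ge 0$) then gives $\sum_{n} s_{(n,\alpha)}=\sum_\mu (-1)^{ht(\mu)} s_\mu$ over exactly the partitions described. I expect the genuinely fiddly step to be the translation paragraph: verifying that reading the sorted beta-set off with $L$ beads produces the displayed $\mu$, that the threshold identity $\{i:\alpha_i>k\}=\{i:b_i>m\}$ holds, and that ``add a box to each of the first $k$ columns'' is the correct reading of the horizontal strip — in particular that, for $k\ge 1$, skipping a column is not allowed, which is what rules out spurious partitions such as the absent $(1,1)$ when $\alpha=(2,1)$.
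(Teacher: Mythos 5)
Your argument is correct, and it rests on the same underlying mechanism as the paper's proof --- straightening the Jacobi--Trudi determinant of $(n,\alpha)$ by sorting its rows --- but the execution is genuinely different. The paper performs the rectification dynamically: it iterates adjacent row exchanges (``letting gravity act''), observes that each exchange shaves one cell off the row being passed and that the migrating first part eventually lands as a horizontal strip of size $k$, and reads off the sign as the parity of the number of exchanges. You instead do the whole sort in one step via the contents $c_i=\beta_i+L-i$: the surviving terms are indexed by $m=n+l\in\ZZ_{\ge 0}\setminus B$, the sign is $(-1)^d$ with $d$ the number of elements of $B$ exceeding $m$, and the resulting partition is computed explicitly as $(\alpha_1-1,\dots,\alpha_d-1,k,\alpha_{d+1},\dots,\alpha_l)$. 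This buys several things the paper's proof leaves implicit or only gestures at: the precise identification of the threshold ($d=\#\{i:\alpha_i>k\}$, so the vertical strip sits strictly right of column $k$ and the horizontal strip fills exactly columns $1,\dots,k$), the fact that $ht(\mu)=d$ matches the sign, and --- most importantly --- the bijection $m\mapsto k=m-\#\{b\in B:b<m\}$ between admissible $n$ and the parameters $k\ge 0$, which shows each $\mu$ of the lemma occurs exactly once and nothing else occurs. The paper's proof is shorter and more visual but does not verify these counting statements; yours is longer but complete. The one point worth spelling out a little more in a final write-up is the edge case $k\ge\alpha_1$ (equivalently $d=0$), where ``add a cell to each of the first $k$ columns'' creates new columns of height one and simply prepends the part $k$; your beta-set formula handles it automatically, but the translation back to boxes deserves a sentence.
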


 Before providing a proof for the result, let us look at an example.

\begin{example}[Rectifications of $\alpha=(n,5,4,3,3)$, with $n \in \ZZ$.]

It is easy to check that if $n<-4$ the Jacobi--Trudi determinant  $s_{(n,5,4,3,3)}=0$, as the first  row of this determinant is identically zero.

We  illustrate all  possible partitions $\mu$ obtained by 
rectifying $\alpha=(k,5,4,3,3),$ for $k \ge -4$ in the following Figure. 

To make explicit the comparison of  $\alpha$ with its rectified partition $\mu$, we draw in white those cells of $\alpha$ that are not in $\mu$ (forming a vertical strip), and in dark blue boxes those cells added that where initially not in $\alpha$ but that are in $\mu$ (forming a horizontal strip). That is to say, the original integer sequence $\alpha$ is the union of the white and the pale blue cells, and its rectification $\lambda$ is the union of both shades of blue.

Finally, we write the integer sequence $\alpha$ below the diagram of  rectified partition it corresponds, and write the parameter $k$ (the index of  Lemma \ref{characterizationSchur}) on top of it. 
\begin{multline*}
\ytableausetup
{boxsize=.9em}\ytableausetup
{aligntableaux=top}
\begin{ytableau}
\none[\quad  \quad  \quad  ^{k=0}  ] \\
\none\\
*(blue!40) &*(blue!40) & \\
*(blue!40)&*(blue!40) & \\
*(blue!40) &*(blue!40) &*(blue!40)&  \\
*(blue!40) &*(blue!40)  & *(blue!40)   &*(blue!40)   & \\
\none[\quad  \quad  \quad   _{(-4,5,4,3,3)} ] \\
\end{ytableau}
\quad 
\begin{ytableau}
\none[\quad  \quad  \quad  ^{k=1}  ] \\
*(blue!80) \\
*(blue!40) &*(blue!40) & \\
*(blue!40)&*(blue!40) & \\
*(blue!40) &*(blue!40) &*(blue!40)&  \\
*(blue!40) &*(blue!40)  & *(blue!40)   &*(blue!40)   & \\
\none[\quad  \quad  \quad  \quad  _{(-3,5,4,3,3)} ] \\
\none\\
\end{ytableau}
\quad 
\begin{ytableau}
\none[\quad  \quad  \quad  ^{k=2}  ] \\
*(blue!80)  & *(blue!80) \\
*(blue!40) &*(blue!40) & \\
*(blue!40)&*(blue!40) & \\
*(blue!40) &*(blue!40) &*(blue!40)&  \\
*(blue!40) &*(blue!40)  & *(blue!40)   &*(blue!40)   & \\
\none[\quad  \quad  \quad  \quad  _{(-2,5,4,3,3)} ] \\
\none\\
\end{ytableau}
\quad 
\begin{ytableau}
\none[\quad  \quad  \quad  ^{k=3}  ] \\
*(blue!80)  & *(blue!80) & *(blue!80) \\
*(blue!40) &*(blue!40) & *(blue!40)\\
*(blue!40)&*(blue!40) &*(blue!40) \\
*(blue!40) &*(blue!40) &*(blue!40)&  \\
*(blue!40) &*(blue!40)  & *(blue!40)   &*(blue!40)   & \\
\none[\quad  \quad  \quad  \quad  _{(1,5,4,3,3)} ] \\
\none\\
\end{ytableau}
\quad
\begin{ytableau}
\none[\quad  \quad  \quad  ^{k=4}  ] \\
*(blue!80)  & *(blue!80) & *(blue!80) \\
*(blue!40) &*(blue!40) & *(blue!40)\\
*(blue!40)&*(blue!40) &*(blue!40) &*(blue!80) \\
*(blue!40) &*(blue!40) &*(blue!40)&  *(blue!40)\\
*(blue!40) &*(blue!40)  & *(blue!40)   &*(blue!40)   & \\
\none[\quad  \quad  \quad  \quad  _{(-) (3,5,4,3,3)} ] \\
\none\\
\end{ytableau}\\
\quad 
\begin{ytableau}
\none[\quad  \quad  \quad  ^{k=5}  ] \\
*(blue!80)  & *(blue!80) & *(blue!80) \\
*(blue!40) &*(blue!40) & *(blue!40)\\
*(blue!40)&*(blue!40) &*(blue!40) &*(blue!80) \\
*(blue!40) &*(blue!40) &*(blue!40)&  *(blue!40) &*(blue!80)\\
*(blue!40) &*(blue!40)  & *(blue!40)   &*(blue!40)    &*(blue!40) \\
\none[\quad  \quad  \quad  \quad  _{(5,5,4,3,3)} ] \\
\none\\
\end{ytableau}
\quad 
\begin{ytableau}
\none[\quad  \quad  \quad  ^{k=6}  ] \\
*(blue!80)  & *(blue!80) & *(blue!80) \\
*(blue!40) &*(blue!40) & *(blue!40)\\
*(blue!40)&*(blue!40) &*(blue!40) &*(blue!80) \\
*(blue!40) &*(blue!40) &*(blue!40)&  *(blue!40) &*(blue!80)\\
*(blue!40) &*(blue!40)  & *(blue!40)   &*(blue!40)    &*(blue!40) & *(blue!80)\\
\none[\quad  \quad  \quad  \quad  _{(6,5,4,3,3)} ] \\
\none\\
\end{ytableau}
\quad 
\begin{ytableau}
\none[\quad  \quad  \quad  ^{k=7}  ] \\
*(blue!80)  & *(blue!80) & *(blue!80) \\
*(blue!40) &*(blue!40) & *(blue!40)\\
*(blue!40)&*(blue!40) &*(blue!40) &*(blue!80) \\
*(blue!40) &*(blue!40) &*(blue!40)&  *(blue!40) &*(blue!80)\\
*(blue!40) &*(blue!40)  & *(blue!40)   &*(blue!40)    &*(blue!40) & *(blue!80)   & *(blue!80)\\
\none[\quad  \quad  \quad  \quad  _{(7,5,4,3,3)} ] \\
\end{ytableau}
\quad 
\begin{ytableau}
\none[\quad  \quad  \quad  ^{k\ge8}  ] \\
*(blue!80)  & *(blue!80) & *(blue!80) \\
*(blue!40) &*(blue!40) & *(blue!40)\\
*(blue!40)&*(blue!40) &*(blue!40) &*(blue!80) \\
*(blue!40) &*(blue!40) &*(blue!40)&  *(blue!40) &*(blue!80)\\
*(blue!40) &*(blue!40)  & *(blue!40)   &*(blue!40)    &*(blue!40) & *(blue!80)   & *(blue!80)  & *(blue!80) &  \none[\quad \cdots ]\\
\none[\quad  \quad  \quad  \quad \quad  \quad  \quad \ldots  \quad ]\\
\end{ytableau}
\end{multline*}

Note that boxes are deleted from $\alpha$ precisely when $n < \alpha_1$. 
Also, sequence ${(3,5,4,3,3)} $ is the only one that rectifies to a partition negative sign.

\end{example}

\begin{definition}[$\alpha$--removable] Let $\alpha$ be an integer sequence where only the last part is allowed to be out-of-order or nonnegative, and let $\mu$ be its rectification. Draw $\alpha$ and $\mu$ as before. Then, a  cell of  $\alpha \cap \mu $  will be called a {\em $\alpha$--removable} if neither it has a blue cell on top, nor a white cell to its right.

%Let $c $ be a cell  of $\alpha \cap \mu $ (the region draw using pale blue), then the previous lemma shows that either the cell immediately above $c$ belongs to $\mu$, or the cell immediately to the right of $c$ belongs to  $\alpha \setminus \mu$, or both. 
%That is either $c$ has a blue cell on top of it, or a white cell to its right, or both.
%
%A  cell of  $\alpha \cap \mu $ that does not share this property.  That is, the $\alpha$--removable cells are
%those cells of $\alpha \cap \mu $ that neither have a (blue) cell on top of it, nor the cell to its right has been removed (i.e., do not have a white cell to its right).

 \end{definition} 

\begin{remark}
The set of partitions appearing in Lemma   \ref{characterizationSchur} consists of  those partitions that can be obtained from $\alpha$ by adding a horizontal strip, and deleting a vertical strip,  and that leave no $\alpha$--removable corners.
\end{remark}

	\begin{proof} 
	Suppose that $\alpha$ rectifies to a partition $\mu$.
	
	After exchanging rows $r_k$ and $r_{k+1}$, the size of $r_{k+1}$ (now at the $r^{th}$--row) decreases by $1$. This part of the procedure is responsible for the appearance of the vertical strip.  On the other hand, the size of $r_k$ (now at position $r+1$) increases by $1$. 
		
	Originally, there only the first element of the integer sequence was out-of-order. This is the part that keeps moving up  in the rectification process. Eventually, it arrives to a position were the sequence is weakly increasing and positive (otherwise the procedure would have failed).	Looking at the diagram, this has the effect of adding a horizontal strip of $\alpha$. The size of this horizontal strip is the parameter $k$ appearing in the lemma.
	
Finally, notice that  each time that this part goes up, the part that it is  exchanged with losses a cell. As a result, there will be no $\alpha$--removable cells in $\alpha \cap \mu $.
	 	\end{proof}

		\begin{example}   The rectification of $s_ {(-2,5,4,3,3)} $:
\begin{multline*}
\ytableausetup
{boxsize=.9em}\ytableausetup
{aligntableaux=top}	
	\begin{ytableau}
\none & \none & *(blue!40)  & *(blue!40)&*(blue!40) \\
\none & \none & *(blue!40) &*(blue!40) &*(blue!40) \\
\none & \none & *(blue!40)&*(blue!40) &*(blue!40) &*(blue!40) \\
\none & \none & *(blue!40) &*(blue!40) &*(blue!40) &*(blue!40) &*(blue!40)  \\
*(red!40)& *(red!40) & \none  &\none   & \none    &\none    & \none  \\
\none\\
\none[\quad  \quad  \quad  \quad   \quad  {(-2,5,4,3,3)} ] \\
\none\\
\end{ytableau}
= (-) \,\,
	\begin{ytableau}
 \none & *(blue!40)  & *(blue!40)&*(blue!40) \\
 \none & *(blue!40) &*(blue!40) &*(blue!40) \\
 \none & *(blue!40)&*(blue!40) &*(blue!40) &*(blue!40) \\
 *(red!40) & \none  &\none   & \none    &\none    & \none  \\
 \none & *(blue!40) &*(blue!40) &*(blue!40) &*(blue!40) \\
\none\\
\none[\quad  \quad  \quad  \quad   \quad  {(4,-1,4,3,3)} ] \\
\none\\
\end{ytableau}
= (+) \,\,
	\begin{ytableau}
 *(blue!40)  & *(blue!40)&*(blue!40) \\
 *(blue!40) &*(blue!40) &*(blue!40) \\
 \none  &\none   & \none    &\none    & \none  \\
  *(blue!40) &*(blue!40) &*(blue!40)  \\
 *(blue!40) &*(blue!40) &*(blue!40) &*(blue!40) \\
\none\\
\none[\quad    \quad  \quad   \quad  {(4,3,0,3,3)} ] \\
\none\\
\end{ytableau}\\
= (-) \,\,
	\begin{ytableau}
 *(blue!40)  & *(blue!40)&*(blue!40) \\
 *(blue!40)  \\
*(blue!40) &*(blue!40)  \\
  *(blue!40) &*(blue!40) &*(blue!40)  \\
 *(blue!40) &*(blue!40) &*(blue!40) &*(blue!40) \\
\none\\
\none[\quad  \quad    \quad   \quad  {(4,3,2,1,3)} ] \\
\none\\
\end{ytableau}
= (+) \,\,
	\begin{ytableau}
 *(blue!40)  & *(blue!40)\\
 *(blue!40) &*(blue!40) \\
*(blue!40) &*(blue!40)  \\
  *(blue!40) &*(blue!40) &*(blue!40)  \\
 *(blue!40) &*(blue!40) &*(blue!40) &*(blue!40) \\
\none\\
\none[\quad  \quad  \quad   \quad  {(4,3,2,2,2)} ] \\
\none\\
\end{ytableau}
=(+) \,\,
\begin{ytableau}
*(blue!80)  & *(blue!80) \\
*(blue!40) &*(blue!40) & \\
*(blue!40)&*(blue!40) & \\
*(blue!40) &*(blue!40) &*(blue!40)&  \\
*(blue!40) &*(blue!40)  & *(blue!40)   &*(blue!40)   & \\
\none\\
\end{ytableau}
\end{multline*}
Where we use  the same conventions for colors as in the previous example, and additionally, denoting negative parts with pale red.

\end{example}

It may be interesting to compare those partitions appearing in the previous lemma, with  those of Theorem 1.1 of  \cite{BOR-JA}, where a formula is provided that allows to compute the reduced Kronecker coefficients from the Kronecker coefficients.

%%%%%%%%%%%%%%%%%%%%
%%%%%%%%%%%%%%%%%%%%
%%%%%%%%%%%%%%%%%%%%

\subsection{Operations of alphabets}

We  discuss some basic facts regarding operations on alphabets. First, notice that it is customary to write  a morphism of algebras $A$ from $\Sym$ to some commutative algebra $\mathcal{R}$ as 
 $f \mapsto f[A]$ (rather than $f \mapsto A(f)$),  and consider it as a ``specialization at the virtual alphabet $A$.'' 

Since the power sum symmetric functions $p_k$ ($k \geq 1$) generate $\Sym$ and are algebraically independent, the map 
%\begin{equation}\label{bijection}
$A \mapsto (p_1[A], p_2[A], \ldots)$
%\end{equation}
is a bijection from the set of all morphisms of algebras from $\Sym$ to $\mathcal{R}$ 
to the set of infinite sequences of elements from $\mathcal{R}$. This set of sequences is endowed with its operations of component-wise sum and product, and multiplication by a scalar. 
The bijection %\eqref{bijection} 
is used to lift these operations to the set of morphisms from $\Sym$ to $\mathcal{R}$. 

In this way,  expressions like $f[A+B]$ and $f[AB]$, where $f$ is a symmetric function and $A$ and $B$ are two ``virtual alphabets,'' are defined.  Note that, by definition, for any power sum $p_k$ ($k \geq 1$), virtual alphabets $A$ and $B$, and scalar $z$,
\[
p_k[A+B]=p_k[A]+p_k[B], \qquad p_k[AB]=p_k[A] \cdot p_k[B], \qquad p_k[zA]=z \; p_k[A].
\]
In particular $p_1[A]=A, p_1[A+B]=A+B$, and so on.

The morphism $f \mapsto f^{\perp}=D_f$ associates to $f$ the adjoint of the operator ``multiplication by $f$",  see \cite{LascouxBook, MacdonaldBook}. In particular, this morphism defines a somehow misterious alphabet $ X^{\perp}  = p_1^\perp[X]$ that makes the following identity hold $f^{\perp}[X]=f[X^\perp].$  Later on,  a combinatorial interpretation for both, $p_k^{\perp}[X]$ and $ X^{\perp}$, will be provided.

%%%%%%%%%%%%%%%%%%%%
%%%%%%%%%%%%%%%%%%%%

\subsection{The series $\ss$}
 Let $\ss=\sum_{n=0}^{\infty} h_n$ be the generating function for the complete homogeneous symmetric functions. From the definition of $h_k[A]$ as the sum of all square--free monomials of degree $k$ in alphabet $A$, the following two well--known identities readily follow:
 \begin{align*}
 &\ss[X]=\prod_{x \in X} \frac{1}{1-x}\\
&\ss[X+A]=\ss[X] \ss[A].
\end{align*} 
 
 On the other hand, the Robinson--Schensted--Knuth correspondence provides a combinatorial proof of the identity $\ss[AB]=\sum_{\lambda} s_{\lambda}[A] s_{\lambda}[B] $, known as ``Cauchy's kernel". This is because $\ss[AB]$ is the generating functions for biwords, and because to each such biword the RSK algorithm associates a pair of semistandard tableaux of the same shape (bijectively). The fact that Schur functions are the generating functions for semistardard tableaux completes the argument.

%%%%%%%%%%%%%%%%%%%%
%%%%%%%%%%%%%%%%%%%%

\subsection{Schur functions of a negative alphabet}

We need  a combinatorial interpretation for Schur functions evaluated in a difference of two alphabets. Again, this is carefully done in Lascoux's book,
working with operations on alphabets and Jacobi--Trudi determinants.  Another interesting   approach is given by a formula of Sergeev--Pragacz that expresses a Schur function evaluated at the difference of two alphabet in terms of Vandermonde determinants, a combinatorial proof for this formula can be found in \cite{BergeronGarsia}.
Here we will use its  combinatorial definition in terms of fillings of tableaux, that we present for the sake of completeness.

From the identity $\ss[X+A]=\ss[X] \ss[A]$, setting $A=-X$ we conclude that $1=\sigma[0]=\sigma[X]\sigma[-X]$. Therefore 
\[
\sigma[-X]=\frac{1}{\sigma[X]}=\prod_{x \in X} (1-x)=\prod_{x \in X} (1+(-x))  = \sum (-1)^k e_k[X]
\]

Therefore, applying the the involution  $X \mapsto -X$ to $\sigma$, and we conclude that 
\begin{align}\label{involution}
h_k[X] \mapsto (-1)^k e_k[X]
\end{align}
These two symmetric functions are, respectively, the generating functions for $k$--multisets/sets on alphabet $X$, the most classical instance of combinatorial reciprocity, \cite{StanleyCombRec}. 

% In general, the following well--known theorem describes  Schur function evaluated at differences of alphabets in terms of the combinatorics of tableaux. 

Let $X$ and $Y$ be two alphabets. We are interested in the new alphabet $X-Y$, where we call the elements of $X$ ``positive letters", whereas the ones of $-Y$ are the ``negative letters".
A total order on  $X-Y$  is usually defined by saying that negative letters are smaller than the positive ones, and by breaking ties using  the absolute value of the entries.

\begin{definition}[signed tableau] Let $X-Y$ be an alphabet defined as the difference of two positive alphabets.

A signed tableau $T$ of shape $\lambda$ is a filling of the diagram of $\lambda$ with entries in $X-Y$ such that the entries
are weakly increasing in the rows and columns, and 
\begin{enumerate}
\item Positive letters are strictly increasing on the columns.
\item Negative letters are strictly increasing on the rows.
\end{enumerate}

The weight of a letter depends on its sign. 
Let $a$ be  a positive letter, then $a$ has  weight   $x_a$, and  the corresponding negative letter $-a$ has weight  $-y_a$. The weight 
of a signed tableaux is defined as usual.

We denote by $T_+$ be the subtableau of $T$  positive letters, and by $T_-$ the subtableau of the negative ones.

\end{definition}

\begin{example}
A signed tableau $T$ of shape $(6,5,4,2,1)$:  

\begin{align*}
\ytableausetup
{boxsize=1.4em}
\ytableausetup
{aligntableaux=top}
  \begin{ytableau}
*(blue!20) 5\\
*(blue!20)4&*(blue!20) 4\\
*(red!20) {-1} &*(red!20) -3 & *(blue!20)3 & *(blue!20)3\\
*(red!20) -1&*(red!20) -3& *(blue!20) 1 & *(blue!20)2 &*(blue!20) 3\\
*(red!20) -1 &*(red!20) -2&*(red!20) -3& *(blue!20)1   & *(blue!20)2  & *(blue!20)2   \\
\end{ytableau}
\end{align*}
where the subtableau $T_+$ is shaded blue, whereas $T_-$ is shaded red.
The signed tableaux $T$ has weight:
\begin{align*}
x_1^2 x_2^3 x_3^3 x_4^2 x_5 (-y_1)^3(-y_2)(-y_3)^3=-x_1^2 x_2^3 x_3^3 x_4^2 x_5\, y_1^3 y_2 y_3^3
\end{align*}

\end{example}

\begin{theorem}[A Schur function evaluated in the difference of two alphabets]\label{Schurdiff}
Let $\lambda$ be a partition, and let $X$ and $Y$ be two alphabets,  then
\[
s_\lambda[X-Y] = \sum_{\substack{T=T_-\cup T_+}} x^{T_+} y^{T_-}
\]
where  we are summing over all possible signed tableaux $T=T_-\cup T_+$ of shape $\lambda$.

\end{theorem}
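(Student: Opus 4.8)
The statement is an instance of the classical combinatorial description of a Schur function at a difference of alphabets, and since each signed tableau carries a well-defined sign there is nothing to cancel; the natural strategy is therefore not a sign-reversing involution but a \emph{structural decomposition} matched against an algebraic expansion. The plan is to prove that the two sides agree by factoring a signed tableau into its positive and negative parts and recognising each part as an (ordinary, possibly skew) Schur generating function.

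First I would record the algebraic identity to be matched. Using the coproduct/branching rule $s_\lambda[A+B]=\sum_{\nu\subseteq\lambda}s_\nu[A]\,s_{\lambda/\nu}[B]$ with $A=-Y$ and $B=X$, together with the negative-alphabet identity
\[
s_\nu[-Y]=(-1)^{|\nu|}s_{\nu'}[Y],
\]
one obtains
\[
s_\lambda[X-Y]=\sum_{\nu\subseteq\lambda}(-1)^{|\nu|}\,s_{\nu'}[Y]\,s_{\lambda/\nu}[X].
\]
The identity $s_\nu[-Y]=(-1)^{|\nu|}s_{\nu'}[Y]$ follows from the Jacobi--Trudi determinant and Eq.~\eqref{involution}, $h_m[-Y]=(-1)^m e_m[Y]$: substituting into $s_\nu=\det(h_{\nu_i-i+j})$ and pulling the signs $(-1)^{\nu_i+i}$ and $(-1)^j$ out of the rows and columns turns it into the dual (N\"agelsbach--Kostka) determinant $\det(e_{\nu_i-i+j}[Y])=s_{\nu'}[Y]$, with global sign $(-1)^{|\nu|}$.

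Next comes the combinatorial heart: the decomposition of a signed tableau. I would prove that in any signed tableau $T$ of shape $\lambda$ the cells carrying negative letters form a straight Young subdiagram $\nu\subseteq\lambda$, while the positive cells fill the skew shape $\lambda/\nu$. This is forced by the order conventions: entries weakly increase along rows and columns and every negative letter is smaller than every positive one, so the cell immediately to the left of, respectively below, a negative cell is again negative; hence the negative region is closed under moving left and down, i.e.\ it is a partition $\nu$. The negative subtableau $T_-$ is then a filling of $\nu$ that is \emph{strict along rows and weak along columns}, which, after conjugating the shape, is exactly a semistandard tableau of shape $\nu'$ in the alphabet $Y$; thus $\sum_{T_-}\prod(-y)=(-1)^{|\nu|}s_{\nu'}[Y]$, the sign $(-1)^{|\nu|}$ being produced by the weight $-y_a$ attached to each of the $|\nu|$ negative cells. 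The positive subtableau $T_+$ is an ordinary skew semistandard tableau of shape $\lambda/\nu$ in $X$, contributing $s_{\lambda/\nu}[X]$.

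Finally I would check that the factorisation $T\mapsto(\nu,T_-,T_+)$ is a bijection: given $\nu\subseteq\lambda$, any choice of an admissible $T_-$ on $\nu$ and an admissible $T_+$ on $\lambda/\nu$ glues to a valid signed tableau, because the only new comparisons created at the common border are between a negative entry and a positive one, and these are automatically satisfied (negatives are smaller than positives). Summing over $\nu$ and over the two independent tableau choices then reproduces the expansion displayed above, proving the theorem. The main obstacle I anticipate is precisely this structural step: establishing that the positive/negative interface is always a genuine partition boundary and that no hidden constraint couples $T_-$ and $T_+$, together with the careful bookkeeping of the strict/weak conditions (and of the French drawing convention) needed to identify $T_-$ with a semistandard tableau of the conjugate shape. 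A more self-contained alternative, avoiding the branching rule, would start from $s_\lambda[X-Y]=\det(h_{\lambda_i-i+j}[X-Y])$, read $h_m[X-Y]$ as a single signed row, and run a Lindstr\"om--Gessel--Viennot lattice-path argument with two kinds of horizontal steps; I expect that route to be correct but heavier in its path-model setup.
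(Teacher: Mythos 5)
Your proposal follows essentially the same route as the paper's (one-sentence) proof: the branching rule $s_\lambda[X+Y]=\sum_\nu s_\nu[X]s_{\lambda/\nu}[Y]$, the involution $h_k\mapsto(-1)^k e_k$ applied through the Jacobi--Trudi determinant to get $s_\nu[-Y]=(-1)^{|\nu|}s_{\nu'}[Y]$, and the dual Jacobi--Trudi identity. Your write-up is correct and simply supplies the details the paper leaves implicit, in particular the explicit factorisation of a signed tableau into a straight-shape negative part and a skew positive part.
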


\begin{proof}
Theorem \ref{Schurdiff} follows from the identity $s_\lambda[X+Y]=\sum_{\mu \in \lambda} s_\mu[X] s_{\lambda /\mu}[Y]$  (immediate from the combinatorial definition of Schur function in terms of fillings of tableaux), combined with Eq. \ref{involution}, and the dual Jacobi-Trudi identity (the determinant expression for a Schur function on the elementary symmetric basis). 
\end{proof}

If the negative alphabet $-Y=\{-t\}$  has just one letter, then condition (2) implies that  $T_-$ defines a vertical strip. Moreover, since  $s_\lambda[X-Y]$ is a symmetric function, we can use any total order for the letters of $X-Y$  in its combinatorial definition. Setting negative letters to be bigger than the positive ones makes it transparent that  $s_\lambda[X-Y]$ is the sum of all Schur functions $t^\ell S_\alpha[X]$, where $\alpha$ can be obtained from $\lambda$ by removing a vertical strip of size $\ell$. We have obtained the following corollary:

\begin{corollary} \label{NegativeSchur}
Let $\alpha$ be a partition, then 
\[
 s_{\alpha} \big[X -1/t \big] = \sum_{k\ge0 } \, \, \sum_{\substack{\lambda \text{ partition } \\\alpha\setminus \lambda \text{ $k$--vertical strip}}} \big(-1/t \big)^k s_{\lambda}[X]
\]
\end{corollary}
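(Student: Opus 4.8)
The plan is to derive Corollary \ref{NegativeSchur} directly from Theorem \ref{Schurdiff} by specializing the negative alphabet to a single letter $-1/t$. First I would set $Y = \{1/t\}$ and apply Theorem \ref{Schurdiff}, which expresses $s_\alpha[X - 1/t]$ as a sum over signed tableaux $T = T_+ \cup T_-$ of shape $\alpha$ whose positive part is filled from $X$ and whose negative part uses only the single negative letter $-1/t$. Each such tableau contributes $x^{T_+}(-1/t)^{|T_-|}$, where the weight of the single negative letter $-1/t$ is $-1/t$ as prescribed by the weight convention in the definition of a signed tableau.

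The key structural observation is that when $-Y$ consists of the single letter $-1/t$, condition (2) in the definition of a signed tableau forces the negative entries to be strictly increasing along rows; since there is only one negative letter available, at most one $-1/t$ can appear in each row, so $T_-$ occupies a vertical strip inside the diagram of $\alpha$. I would then invoke the remark already made in the excerpt: because $s_\alpha[X - Y]$ is a symmetric function, its combinatorial description is independent of the total order chosen on the letters of $X - 1/t$. Choosing the order in which the negative letter is \emph{larger} than all positive letters, the negative cells are pushed to the end of each row, so that $T_-$ is precisely a vertical strip sitting at the right end of some set of rows, and the positive subtableau $T_+$ fills the complementary partition shape $\lambda$ obtained from $\alpha$ by deleting this vertical strip. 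What remains, namely $T_+$, is then an honest semistandard filling of $\lambda$ with letters from $X$, whose generating function is exactly $s_\lambda[X]$.

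Assembling these pieces, I would group the signed tableaux according to the partition $\lambda = \alpha \setminus T_-$ and the size $k = |T_-|$ of the removed vertical strip. For fixed $\lambda$ and $k$, summing over all fillings of $T_+$ yields the factor $s_\lambda[X]$, while the negative part contributes the uniform factor $(-1/t)^k$. Summing over all valid $\lambda$ and all $k \ge 0$ gives exactly
\[
s_\alpha[X - 1/t] = \sum_{k \ge 0} \sum_{\substack{\lambda \text{ partition} \\ \alpha \setminus \lambda \ \text{$k$-vertical strip}}} (-1/t)^k s_\lambda[X],
\]
which is the claimed identity.

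The step I expect to require the most care is justifying that $T_-$ really does form a vertical strip and that the complementary shape $\lambda$ is a genuine partition admitting a semistandard filling. This hinges on combining condition (2) (strict increase of negative letters along rows, forcing at most one per row) with the overall weak monotonicity of entries in rows and columns, together with the freedom to reorder the alphabet so that the negative letter is maximal; one must check that after this reordering the positive cells left-justify into a partition shape with no gaps. This is the one point where the combinatorial bookkeeping must be handled explicitly rather than waved through, but it follows cleanly once the reordering trick is invoked.
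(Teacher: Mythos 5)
Your proposal is correct and follows essentially the same route as the paper: specialize Theorem \ref{Schurdiff} to the one-letter negative alphabet $\{-1/t\}$, use condition (2) to force at most one negative entry per row so that $T_-$ is a vertical strip, and exploit the symmetry of $s_\alpha[X-Y]$ to reorder the alphabet with the negative letter maximal, so that $T_+$ fills a genuine partition $\lambda$ and contributes $s_\lambda[X]$ with sign $(-1/t)^{|T_-|}$. The paper's own derivation is exactly this argument, stated in the paragraph preceding the corollary.
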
 

From the combinatorial definition of a Schur function evaluated in a negative alphabet Cauchy's dual identity follows:
\[
\ss[-AB]=\sum_{\lambda} (-1)^{|\lambda|} s_{\lambda'}[A] s_{\lambda}[B] 
\]

%%%%%%%%%%%%%%%%%%%%%%%%%%%%%%%%%%%%%%%%
%%%%%%%%%%%%%%%%%%%%%%%%%%%%%%%%%%%%%%%%
%%%%%%%%%%%%%%%%%%%%%%%%%%%%%%%%%%%%%%%%

\subsection{The alphabet $X^\perp$}

The object of this section is to provide a combinatorial interpretation for  the specialization $p_n^\perp[X]   = p_n[X^\perp]$ .
We recall the following classical result,   \cite[p. 25]{LascouxBook}.
\begin{lemma} \label{MNruleDer}

Let $\alpha=(\alpha_1, \alpha_2, \cdots, \alpha_n)$ be a sequence of positive integers
\[
p^\perp_i s_{\alpha} = \sum_{k=1}^n s_{(\alpha_1, \cdots, \alpha_k-i, \cdots, \alpha_n)}
\]
%The operator $p^\perp_i $ acts as a deviation.
\end{lemma}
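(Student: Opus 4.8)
The plan is to exploit that the skewing operator $p_i^\perp$, being the adjoint of multiplication by the power sum $p_i$ with respect to the Hall inner product, is a \emph{derivation} of $\Sym$, and then to apply it to the Jacobi--Trudi determinant $s_\alpha=\det(h_{\alpha_r+c-r})_{r,c=1}^n$ one row at a time.

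I would first assemble two ingredients. Identifying $\Sym$ with the polynomial ring $\QQ[p_1,p_2,\dots]$, a direct computation with $\langle p_\lambda,p_\mu\rangle=z_\lambda\,\delta_{\lambda\mu}$ shows that the adjoint of multiplication by $p_i$ is the operator $i\,\partial/\partial p_i$; in particular $p_i^\perp$ is a derivation. The second ingredient is its action on the complete homogeneous functions: writing the generating series as $\sum_{k\ge0}h_k\,t^k=\exp\!\big(\sum_{m\ge1}p_m t^m/m\big)$ and applying $p_i^\perp=i\,\partial/\partial p_i$ gives $p_i^\perp\big(\sum_k h_k t^k\big)=t^i\sum_k h_k t^k$, so that $p_i^\perp h_k=h_{k-i}$ for every $k$ (with the usual conventions $h_0=1$, $h_j=0$ for $j<0$).

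Now I would apply the derivation $p_i^\perp$ to the Jacobi--Trudi determinant. Expanding $\det(h_{\alpha_r+c-r})$ by the Leibniz formula and using the product rule, the derivation of a determinant equals the sum, over the rows $r=1,\dots,n$, of the determinant obtained by applying $p_i^\perp$ to the $r$-th row alone. Differentiating row $k$ replaces each entry $h_{\alpha_k+c-k}$ by $h_{\alpha_k-i+c-k}$, which is exactly the $k$-th row of the Jacobi--Trudi matrix of the integer sequence $(\alpha_1,\dots,\alpha_k-i,\dots,\alpha_n)$; hence this determinant equals $s_{(\alpha_1,\dots,\alpha_k-i,\dots,\alpha_n)}$. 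Summing over $k$ yields the stated identity.

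The only genuinely non-formal point is the identification $p_i^\perp=i\,\partial/\partial p_i$ (equivalently, that $p_i^\perp$ is a derivation); once this is in hand the rest is bookkeeping. A feature worth stressing is that the sequences $(\alpha_1,\dots,\alpha_k-i,\dots,\alpha_n)$ appearing on the right are not partitions in general, but this causes no difficulty: as discussed earlier, the Jacobi--Trudi determinant is defined for any integer sequence, so no rectification is needed to make sense of the right-hand side.
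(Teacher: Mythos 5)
Your proof is correct and follows essentially the same route as the paper's: establish $p_i^\perp h_k = h_{k-i}$, then use that $p_i^\perp$ acts as a derivation and apply the product rule to the Jacobi--Trudi determinant row by row, regrouping the terms by which row was differentiated. The only cosmetic differences are that you obtain $p_i^\perp h_k=h_{k-i}$ from the generating series $\exp\big(\sum_{m\ge1}p_m t^m/m\big)$ while the paper reads it off from the power-sum expansion of $h_n$, and that you make the identification $p_i^\perp=i\,\partial/\partial p_i$ (hence the derivation property) explicit where the paper leaves it implicit in the phrase ``apply the product rule.''
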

\begin{proof}
It is immediate that $p^\perp_i h_k=h_{k-i}$, as the coefficients of $h_n$ in the power sum basis are one, for all $p_{\lambda}$, with $\lambda \vdash n$.

On the other hand, if we develop the Jacobi--Trudi determinant, and then apply the
product rule, the result follows easily by regrouping together all the summand where the same $h_k$ has been derived.
\end{proof}

\begin{example}[The effect of the perp operator] \label{MNex}

\[
p^\perp_3 s_{(6,5,4,2,1)} = s_{(3,5,4,2,1)}+s_{(6,2,4,2,1)}+s_{(6,5,1,2,1)}+s_{(6,5,4,-1,1)}+s_{(6,5,4,2,-2)}
\]
\begin{align*}
\ytableausetup
{boxsize=.8em}\ytableausetup
{aligntableaux=top}
  \begin{ytableau}
*(blue!40)\\
*(blue!40)& *(blue!40)\\
*(blue!40) &*(blue!40) & *(blue!40) & *(blue!40)\\
*(blue!40)&*(blue!40) &*(blue!40) & *(blue!40) & *(blue!40)\\
*(blue!40) &*(blue!40) &*(blue!40)&   &   &    \\
\end{ytableau}
+
\begin{ytableau}
*(blue!40)\\
*(blue!40)& *(blue!40)\\
*(blue!40) &*(blue!40) & *(blue!40) & *(blue!40)\\
*(blue!40)&*(blue!40) & & &\\
*(blue!40) &*(blue!40) &*(blue!40)&*(blue!40)  & *(blue!40) & *(blue!40)  \\
\end{ytableau}+
\begin{ytableau}
*(blue!40)\\
*(blue!40)& *(blue!40)\\
*(blue!40) && &\\
*(blue!40)&*(blue!40) &*(blue!40) & *(blue!40) & *(blue!40)\\
*(blue!40) &*(blue!40) &*(blue!40)&*(blue!40)  & *(blue!40) & *(blue!40)  \\
\end{ytableau}+
\begin{ytableau}
\none& *(blue!40)\\
*(red!40) & &\\
\none& *(blue!40) &*(blue!40) & *(blue!40) & *(blue!40)\\
\none& *(blue!40)&*(blue!40) &*(blue!40) & *(blue!40) & *(blue!40)\\
\none& *(blue!40) &*(blue!40) &*(blue!40)&*(blue!40)  & *(blue!40) & *(blue!40)  \\
\end{ytableau}
+
\begin{ytableau}
*(red!40)&*(red!40)&\\
\none&\none&*(blue!40)& *(blue!40)\\
\none&\none&*(blue!40) &*(blue!40) & *(blue!40) & *(blue!40)\\
\none&\none&*(blue!40)&*(blue!40) &*(blue!40) & *(blue!40) & *(blue!40)\\
\none&\none&*(blue!40) &*(blue!40) &*(blue!40)&*(blue!40)  & *(blue!40) & *(blue!40)  \\
\end{ytableau}
\end{align*}

If we rectify the resulting sequences we then obtain:
\[
p^\perp_3 s_{(6,5,4,2,1)} = -s_{(4,4,4,2,1)}-s_{(6,3,3,2,1)}+s_{(6,5,4)}-s_{(6,5,4,2,-2)}
\]
The the partitions indexing nonzero Schur functions are precisely
\begin{align*}
\ytableausetup
{boxsize=.9em}\ytableausetup
{aligntableaux=top}
(-)\,\,\,
  \begin{ytableau}
*(blue!40)\\
*(blue!40)& *(blue!40)\\
*(blue!40) &*(blue!40) & *(blue!40) & *(blue!40)\\
*(blue!40)&*(blue!40) &*(blue!40) & *(blue!40) & \\
*(blue!40) &*(blue!40) &*(blue!40)&  *(blue!40) &   &    \\
\end{ytableau}
\quad\quad\quad
(-)\,\,\,
\begin{ytableau}
*(blue!40)\\
*(blue!40)& *(blue!40)\\
*(blue!40) &*(blue!40) & *(blue!40) & \\
*(blue!40)&*(blue!40) &*(blue!40) & &\\
*(blue!40) &*(blue!40) &*(blue!40)&*(blue!40)  & *(blue!40) & *(blue!40)  \\
\end{ytableau}
%+
%\begin{ytableau}
%*(blue!40)\\
%*(blue!40)& *(blue!40)\\
%*(blue!40) && &\\
%*(blue!40)&*(blue!40) &*(blue!40) & *(blue!40) & *(blue!40)\\
%*(blue!40) &*(blue!40) &*(blue!40)&*(blue!40)  & *(blue!40) & *(blue!40)  \\
%\end{ytableau}
\quad\quad\quad
(-)\,\,\,
\begin{ytableau}
\\
& \\
*(blue!40) &*(blue!40) & *(blue!40) & *(blue!40)\\
*(blue!40)&*(blue!40) &*(blue!40) & *(blue!40)&*(blue!40)\\
*(blue!40) &*(blue!40) &*(blue!40)&*(blue!40)  & *(blue!40) & *(blue!40)  \\
\end{ytableau}
\end{align*}
because  sumands 3 and 5 rectify to zero.
\end{example}

As the reader probably suspects, if we rectify the partitions appearing in Lemma  \ref{MNruleDer}
we obtain the classical Murnaghan--Nakayama rule. Details can be found in  \cite{LascouxBook}. On the other hand, the Murnaghan--Nakayama rule can be  derived just as easily  from the previous lemma using the combinatorial 
interpretation just provided.

The definition of specialization of a symmetric function at a virtual alphabet $A$ needs the understanding of $p_n[A]$, for all $n\ge0$.
From a combinatorial point of view,  the effect of the $p_i^\perp$ operator on a Jacobi--Trudi determinant
is to subtract $n$ cells from each of the rows of ${\alpha}$, one at a time. 
On the other hand, if we fix an alphabet $X$, then the identity $p_n^\perp[X]   = p_n[X^\perp]$  provides us with a  plausible combinatorial 
	interpretation for  $(x_k^n)^\perp$:  we say that  $(x_k^n)^\perp$
	acts on $ s_{\alpha}[X]$ by deleting $n$ cells from the $k^{th}$ row of $\alpha$. This makes sense as $ s_{\alpha}[X]$ 
	is nonzero only if $|X| \ge \ell(\lambda)$ so we will be  subtracting $k$ cells for each of the rows, successively.

%%%%%%%%%%%%%%%%%%%%
%%%%%%%%%%%%%%%%%%%%
%%%%%%%%%%%%%%%%%%%%
\subsection{The vertex operator $\Gamma_{(t|X)}$}

 Recall that the vertex operator $\Gamma_{(t|X)}$ is defined by
 $
 s_\alpha =\sum_{n \in \ZZ} s_{(n,\alpha)} [X] t^n.
$ We present a sign--reversing involution that shows that the following identity holds.
% We will show combinatorially that $ \sigma[tX]$ divides $\Gamma_{(t|X)} s_\alpha $. 

\begin{lemma}
Let $\alpha$ be a partition.
\begin{align}
\Gamma_{(t|X)} s_\alpha = \sigma[tX] s_{\alpha}\big[x-1/t\big]
\end{align}
 \end{lemma}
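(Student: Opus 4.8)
The plan is to expand both sides of the identity as signed sums of Schur functions $s_\mu[X]$ graded by powers of $t$, and then to match them coefficient by coefficient through a sign-reversing involution whose fixed points are exactly the partitions produced by Lemma \ref{characterizationSchur}.

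First I would unfold the right-hand side. Writing $\sigma[tX]=\sum_{m\ge 0} t^m h_m[X]$ and applying Corollary \ref{NegativeSchur}, one gets $s_\alpha[X-1/t]=\sum_{k\ge 0}(-1/t)^k\sum_{\lambda} s_\lambda[X]$, the inner sum running over partitions $\lambda$ with $\alpha/\lambda$ a $k$-vertical strip. Multiplying the two series and applying Pieri's rule $h_m[X]\,s_\lambda[X]=\sum_{\mu} s_\mu[X]$ (sum over $\mu$ with $\mu/\lambda$ a horizontal $m$-strip), every term can be collected according to its final partition $\mu$. Since $|\mu/\lambda|-|\alpha/\lambda|=|\mu|-|\alpha|$ does not depend on $\lambda$, the power of $t$ attached to each $\mu$ is forced to be $t^{|\mu|-|\alpha|}$, so that
\begin{equation*}
\sigma[tX]\,s_\alpha[X-1/t]=\sum_{\mu}\Big(\sum_{\lambda}(-1)^{|\alpha/\lambda|}\Big)\,t^{|\mu|-|\alpha|}\,s_\mu[X],
\end{equation*}
where the inner sum is over all $\lambda\subseteq \alpha\cap\mu$ such that $\alpha/\lambda$ is a vertical strip and $\mu/\lambda$ is a horizontal strip.

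On the left-hand side, each $s_{(n,\alpha)}[X]$ rectifies to $c_\mu\, s_\mu[X]$ with $n=|\mu|-|\alpha|$, so $\Gamma_{(t|X)}s_\alpha=\sum_\mu c_\mu\, t^{|\mu|-|\alpha|}s_\mu[X]$; by Lemma \ref{characterizationSchur}, restoring the grading $t^n=t^{|\mu|-|\alpha|}$, the coefficient $c_\mu$ equals $(-1)^{ht(\mu)}$ when $\mu$ is one of the partitions described there and $0$ otherwise. Thus the whole statement reduces to the purely combinatorial claim that, for every fixed partition $\mu$,
\begin{equation*}
\sum_{\lambda}(-1)^{|\alpha/\lambda|}=c_\mu,
\end{equation*}
the sum being over the configurations $\lambda$ above. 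A configuration records a horizontal strip $\mu/\lambda$ (blue cells) and a vertical strip $\alpha/\lambda$ (white cells), which is exactly the data entering the Definition of an $\alpha$-removable cell.

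The heart of the proof is therefore the sign-reversing involution. The sign $(-1)^{|\alpha/\lambda|}$ is the parity of the removed vertical strip, so I want to pair configurations differing by a single removed cell. Given a configuration with at least one $\alpha$-removable cell, I would single out a canonical such cell $c$ (the one coming first in a fixed reading order of the cells of $\alpha\cap\mu$) and toggle its membership in $\lambda$: if $c\in\lambda$ I delete it, turning $c$ into a cell that is simultaneously white and blue, and if $c\notin\lambda$ I reinsert it. The defining conditions of $\alpha$-removability, namely no added cell directly above $c$ and no removed cell directly to its right, are precisely what guarantees that both the vertical-strip and the horizontal-strip conditions survive the toggle, so that the image is again a valid configuration while the size of the vertical strip changes by one, reversing the sign. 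Configurations with no $\alpha$-removable cell are the fixed points; by the Remark they are exactly the partitions of Lemma \ref{characterizationSchur}, each occurring once with a vertical strip of size $ht(\mu)$, so their sign $(-1)^{|\alpha/\lambda|}=(-1)^{ht(\mu)}$ matches $c_\mu$, whereas for partitions $\mu$ outside that list there is no fixed point and the signed count is $0$. The main obstacle I anticipate is showing that the toggle is genuinely an involution: I must choose the reading order so that toggling the canonical $\alpha$-removable cell neither creates nor destroys the $\alpha$-removability of any cell earlier in the order, so that the same cell $c$ is reselected and applying the rule twice returns the original configuration. Verifying this stability of the canonical choice, together with the uniqueness of the fixed-point configuration for a valid $\mu$, is where the delicate case analysis lies.
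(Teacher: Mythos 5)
Your proposal is correct and follows essentially the same route as the paper: expand $\sigma[tX]\,s_\alpha[X-1/t]$ via Corollary \ref{NegativeSchur} and Pieri's rule, cancel configurations by toggling a canonical $\alpha$-removable cell, and identify the surviving terms with the partitions of Lemma \ref{characterizationSchur}. If anything, you are more explicit than the paper about the $t$-grading and about the need to check that the canonical choice of toggled cell is stable under the toggle, a point the paper's proof passes over silently.
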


\begin{proof}

%Recall that $\sigma[tX]$ is the generating functions for the $h_n$'s. Therefore, the product of  $\sigma[tX] s_{\alpha}\big[x-1/t\big]$ is described by Pieri's rule.

First, we apply  Lemma \ref{NegativeSchur} followed with Pieri's rule to the expression  $\sigma[tX] s_{\alpha}\big[x-1/t\big]$  (which is valid since the sequences involved are always partitions).
The partitions  indexing a nonzero Schur function that appear as summands are precisely those that can be obtained by first deleting a vertical strip, and then adding a horizontal one. The sign of the corresponding
Schur function is given by  $(-1)^{|\text{vertical strip}|}$. 

Since this is a signed sum, there are plenty of potential cancelations in this sum. Let us look at them closely. Let $\mu$ be a partition that indexes on of the summands of  $\sigma[tX] s_{\alpha}\big[x-1/t\big]$ (before carrying out any cancellation).  We will refer to such a summand directly by the indexing  partition $\mu$.

We classify the resulting partitions into two classes. Those partitions $\mu$ that have a   $\alpha$--removable corner, and those that have not.

Suppose that there are $\alpha$--removable corner in $\mu$, and denote $c$ be the leftmost one.
\begin{align*}
\ytableausetup
{boxsize=.9em}\ytableausetup
{aligntableaux=top}
  \begin{ytableau}
 *(blue!80)  & *(blue!80)\\
*(blue!40) & *(blue!40)\\
*(blue!40)& *(blue!40) &   *(blue!40)c\\
*(blue!40) &*(blue!40) & *(blue!40) & \\
*(blue!40)&*(blue!40) &*(blue!40) & *(blue!40) & *(blue!40) \\
*(blue!40) &*(blue!40) &*(blue!40)&  *(blue!40) & *(blue!40)     &  *(blue!40) & *(blue!40) &*(blue!80)  & *(blue!80) \\
\end{ytableau}
\end{align*}
The summand indexed by $\mu$ appears indexing twice in  $\sigma[tX] s_{\alpha}\big[x-1/t\big]$: Once where  corner $c$ of $\mu$  was  not removed as part of  the vertical strip. The other one, when $c$ was indeed removed as part of the vertical strip, but then added again by the horizontal strip. Our involution will pair this two tableaux. 

On the other hand, if i there is no $\alpha$--removable corner in $\mu$, then it will be paired to itself.

Notice that if two different tableaux are paired together, then they   have the same weight (as the factors $t$ and $1/t$ will cancel each other), but  their signs will differ. Therefore the two corresponding summands will cancel each other out. 
 On the other hand,  those partitions with no $\alpha$--removable do not cancel out.

We conclude this argument using Lemma \ref{characterizationSchur} we it was showed that, after rectification, those partitions (the ones without $\alpha$--removable corners) are precisely the Schur functions appearing in $\Gamma_{(t|X)} s_\alpha$.
%Finally, note that the sign of the resulting Schur functions coincide on both definitions.
\end{proof}

We finish this note with a useful form of this identity, see \cite{ScharfThibonWybourne, ThibonHopf, BMOR}. 
Let $U_{\alpha}$ be the operator multiplication by $s_{\alpha}$, and let $D_{\beta}$ be the skewing operator. That is, these adjoint operators are defined on the Schur basis by  $U_{\alpha}(s_\lambda)= s_\lambda s_{\alpha} $, and $D_{\beta}(s_\lambda)=s_{\lambda / \beta}$.  From Pieri's rule, and its dual, it follows that the vertex operator, evaluated at $t=1$, can be rewritten in the following elegant and well--known base--free way:
\begin{equation}\label{Gamma-VO}
\Gamma_1=\left( \sum_{i=0}^{\infty} U_{(i)}\right) \left( \sum_{j=0}^{\infty} (-1)^j \D{(1^j)}\right)
\end{equation}

%%%%%%%%%%%%%%%%%%%%
%%%%%%%%%%%%%%%%%%%%
%%%%%%%%%%%%%%%%%%%%
%\section{FALTA}
%
%
%\begin{lemma} \label{properties:ss}
%Let $A$ and $B$  be any two alphabets, and $f$ and $g$ be any two symmetric functions. Then we have the following identities.
%\begin{align}
%&\D{\ss[AX]}(f[X])=f[X+A] && \label{sum}\\
%%&\scalar{\ss[AB]}{g[B]}_B=g[A] && \text{ (Reproducing Kernel)}\label{reproducing}
%\end{align}
%\end{lemma}
%
%

\bibliographystyle{plain}
\bibliography{gamma}

\begin{thebibliography}{10}

\bibitem{BergeronGarsia}
N.~Bergeron and A.~M. Garsia.
\newblock Sergeev's formula and the {L}ittlewood-{R}ichardson rule.
\newblock {\em Linear and Multilinear Algebra}, 27(2):79--100, 1990.

\bibitem{BMOR}
Emmanuel {Briand}, Peter~R.~W. {McNamara}, Rosa {Orellana}, and Mercedes
  {Rosas}.
\newblock {Commutation and normal ordering for operators on symmetric
  functions}.
\newblock {\em ArXiv http://arxiv.org/abs/1509.02581}, September 2015.

\bibitem{BOR-JA}
Emmanuel Briand, Rosa Orellana, and Mercedes Rosas.
\newblock The stability of the {K}ronecker product of {S}chur functions.
\newblock {\em J. Algebra}, 331:11--27, 2011.

\bibitem{BRR}
Emmanuel {Briand}, Amarpreet. {Rattan}, and Mercedes {Rosas}.
\newblock {On the growth of the Kronecker coefficients}.
\newblock {\em ArXiv http://arxiv.org/abs/1607.02887}, September 2016.

\bibitem{BRR2}
Emmanuel {Briand}, Amarpreet. {Rattan}, and Mercedes {Rosas}.
\newblock {On the growth of the Kronecker coefficients: accompanying
  appendices}.
\newblock {\em ArXiv https://arxiv.org/abs/1611.07348}, September 2016.

\bibitem{ThibonCarre}
Christophe Carr{\'e} and Jean-Yves Thibon.
\newblock Plethysm and vertex operators.
\newblock {\em Adv. in Appl. Math.}, 13(4):390--403, 1992.

\bibitem{Jing:Spin}
Nai~Huan Jing.
\newblock Vertex operators, symmetric functions, and the spin group
  {$\Gamma_n$}.
\newblock {\em J. Algebra}, 138(2):340--398, 1991.

\bibitem{ThibonLascoux:stability}
A.~Lascoux and J.-Y. Thibon.
\newblock Vertex operators and the class algebras of symmetric groups.
\newblock {\em Zap. Nauchn. Sem. S.-Peterburg. Otdel. Mat. Inst. Steklov.
  (POMI)}, 283:156--177, 261, 2001.

\bibitem{LascouxBook}
Alain Lascoux.
\newblock {\em Symmetric functions and combinatorial operators on polynomials},
  volume~99 of {\em CBMS Regional Conference Series in Mathematics}.
\newblock Published for the Conference Board of the Mathematical Sciences,
  Washington, DC; by the American Mathematical Society, Providence, RI, 2003.

\bibitem{MacdonaldBook}
I.~G. Macdonald.
\newblock {\em Symmetric functions and {H}all polynomials}.
\newblock Oxford Mathematical Monographs. The Clarendon Press, Oxford
  University Press, New York, second edition, 1995.
\newblock With contributions by A. Zelevinsky, Oxford Science Publications.

\bibitem{ThibonScharf:innerPlethysm}
Thomas Scharf and Jean-Yves Thibon.
\newblock A {H}opf-algebra approach to inner plethysm.
\newblock {\em Adv. Math.}, 104(1):30--58, 1994.

\bibitem{ScharfThibonWybourne}
Thomas Scharf, Jean-Yves Thibon, and Brian~G. Wybourne.
\newblock Reduced notation, inner plethysms and the symmetric group.
\newblock {\em J. Phys. A}, 26(24):7461--7478, 1993.

\bibitem{StanleyCombRec}
Richard~P. Stanley.
\newblock Combinatorial reciprocity theorems.
\newblock {\em Advances in Math.}, 14:194--253, 1974.

\bibitem{ThibonHopf}
Jean-Yves Thibon.
\newblock Hopf algebras of symmetric functions and tensor products of symmetric
  group representations.
\newblock {\em Internat. J. Algebra Comput.}, 1(2):207--221, 1991.

\bibitem{Zelevinsky:LNM}
Andrey~V. Zelevinsky.
\newblock {\em Representations of finite classical groups}, volume 869 of {\em
  Lecture Notes in Mathematics}.
\newblock Springer-Verlag, Berlin-New York, 1981.

\end{thebibliography}

\end{document}